\documentclass[a4paper,12pt]{article}


\usepackage[english,french]{babel}
\RequirePackage[utf8]{inputenc} 
\RequirePackage[section]{placeins}
\RequirePackage[T1]{fontenc} 
\RequirePackage{mathtools} 
\RequirePackage{siunitx} 
\RequirePackage{float} 
\RequirePackage{graphicx} 
\RequirePackage[justification=centering]{caption} 
\RequirePackage{subcaption}
\RequirePackage{wallpaper}
\RequirePackage{nomencl}
\RequirePackage{fancyhdr}
\RequirePackage{url}
\RequirePackage[hidelinks]{hyperref}
\RequirePackage[left=2.5cm,right=2.5cm,top=2cm,bottom=3.5cm]{geometry} 
\usepackage{float}
\usepackage[T1]{fontenc}
\usepackage[utf8]{inputenc}
\usepackage{authblk}
\usepackage{lipsum}
\usepackage{multirow}
\usepackage{textcomp}
\usepackage{colortbl}
\usepackage{accents}
\usepackage{amsmath}
\usepackage{amssymb}
\usepackage{amsthm}
\usepackage{gensymb}
\usepackage{pifont}
\usepackage{pdfpages}
\usepackage{mathrsfs}
\usepackage{stmaryrd}
\usepackage{subcaption}
\usepackage{version}

\usepackage{tcolorbox}
\usepackage{stmaryrd}
\usepackage{multicol}
\usepackage{pdfpages}
\usepackage{mathtools}
\usepackage{algorithm}
\usepackage{algorithmic}
\usepackage{tikz}
\usepackage{thmtools}%
\usepackage{bm}
\usepackage{setspace}

\DeclareMathAlphabet{\mathbbold}{U}{bbold}{m}{n}
\theoremstyle{plain}

\declaretheorem[name=Proposition, numberwithin=section]{prop}

\declaretheorem[name=Definition, numberwithin=section]{definition}

\newcommand{\Real}{\mathbb{R}}
\newcommand{\Proba}{\mathbb{P}}
\newcommand{\Xdom}{\mathcal{X}}
\newcommand{\Udom}{\mathcal{U}}

\newcommand{\Cdom}{\mathcal{C}}
\newcommand{\Ddom}{\mathcal{D}}

\newcommand{\Fdom}{\mathcal{F}}
\newcommand{\Esp}{\mathbb{E}}

\newcommand{\UE}[1]{\renewcommand{\UE}{#1}}
\newcommand{\sujet}[1]{\renewcommand{\sujet}{#1}}
\newcommand{\titre}[1]{\renewcommand{\titre}{#1}}
\newcommand{\enseignant}[1]{\renewcommand{\enseignant}{#1}}
\newcommand{\eleves}[1]{\renewcommand{\eleves}{#1}}


\usepackage[normalem]{ulem}



\makeatletter
\newskip\@bigflushglue \@bigflushglue = -100pt plus 1fil

\def\bigcentering{\let\\\@centercr\rightskip\@bigflushglue%
\leftskip\@bigflushglue
\parindent\z@\parfillskip\z@skip}

\makeatother

\title{Sensitivity Analysis for pollutant concentration maps} 
\usepackage[style=authoryear-comp,maxbibnames=9,maxcitenames=2,uniquelist=false,backend=biber,url=false]{biblatex}
\newrobustcmd*{\parentexttrack}[1]{%
  \begingroup
  \blx@blxinit
  \blx@setsfcodes
  \blx@bibopenparen#1\blx@bibcloseparen
  \endgroup}

\AtEveryCite{%
  }

\makeatother

\addbibresource{bibliographie.bib}


\title{Sensitivity analysis for sets : application to pollutant concentration maps}
\author[1,2]{N. Fellmann}
\author[2]{M. Pasquier}
\author[1]{C. Blanchet-Scalliet}
\author[1]{C. Helbert}
\author[2]{A. Spagnol}
\author[2]{D. Sinoquet}
\affil[1]{École Centrale de Lyon, CNRS UMR 5208, Institut Camille Jordan, 36 Avenue Guy de Collongue,
69134 Écully, France, }
\affil[2]{IFP Energies Nouvelles}
\affil[ ]{\textit{\{noe.fellmann,christophette.blanchet,celine.helbert\}@ec-lyon.fr}}
\affil[ ]{\textit{\{noe.fellmann,mathis.pasquier,delphine.sinoquet,adrien.spagnol\}@ifpen.fr}}
\begin{document}

\selectlanguage{english}
\date{}
\maketitle

\begin{abstract}
In the context of air quality control, our objective is to quantify the impact of uncertain inputs such as meteorological conditions and traffic parameters on pollutant dispersion maps. It is worth noting that the majority of sensitivity analysis methods are designed to deal with scalar or vector outputs and are ill suited to a map-valued output space. To address this, we propose two classes of methods. The first technique focuses on pointwise indices. Sobol indices are calculated for each position on the map to obtain Sobol index maps. Additionally, aggregated Sobol indices are calculated. Another approach treats the maps as sets and proposes a sensitivity analysis of a set-valued output with three different types of sensitivity indices. The first ones are inspired by Sobol indices but are adapted to sets based on the theory of random sets. The second ones adapt \textit{universal} indices defined for a general metric output space. The last set indices use kernel-based sensitivity indices adapted to sets. The proposed methodologies are implemented to carry out an uncertainty analysis for time-averaged concentration maps of pollutants in an urban environment in the Greater Paris area. This entails taking into account uncertain meteorological aspects, such as incoming wind speed and direction, and uncertain traffic factors, such as injected traffic volume, percentage of diesel vehicles, and speed limits on the road network.
\end{abstract}

\newpage
\section{Introduction }

Air quality and dispersion of atmospheric pollutants are key aspects tackled by the emerging discipline of urban physics. Using computational fluid dynamics (CFD) as a tool, this branch of physics simulates pollutant dispersion to aid in urban planning and emergency decision-making. In \cite{Pasquier.2023}, a complete modelling chain has been developed to simulate traffic-related pollutant dispersion at the local urban scale. The method combines a microscopic traffic simulator with a physical engine model to estimate realistic road emissions, which are used as input of a CFD code to model unsteady atmospheric dispersion and compute two-dimensional time-averaged concentration maps at ground level. A presentation of this application is displayed in figure \ref{CFDoutputs}.

\begin{figure}[h!]
    \begin{subfigure}{0.475\textwidth}
        \centering
        \includegraphics[width=1\textwidth]{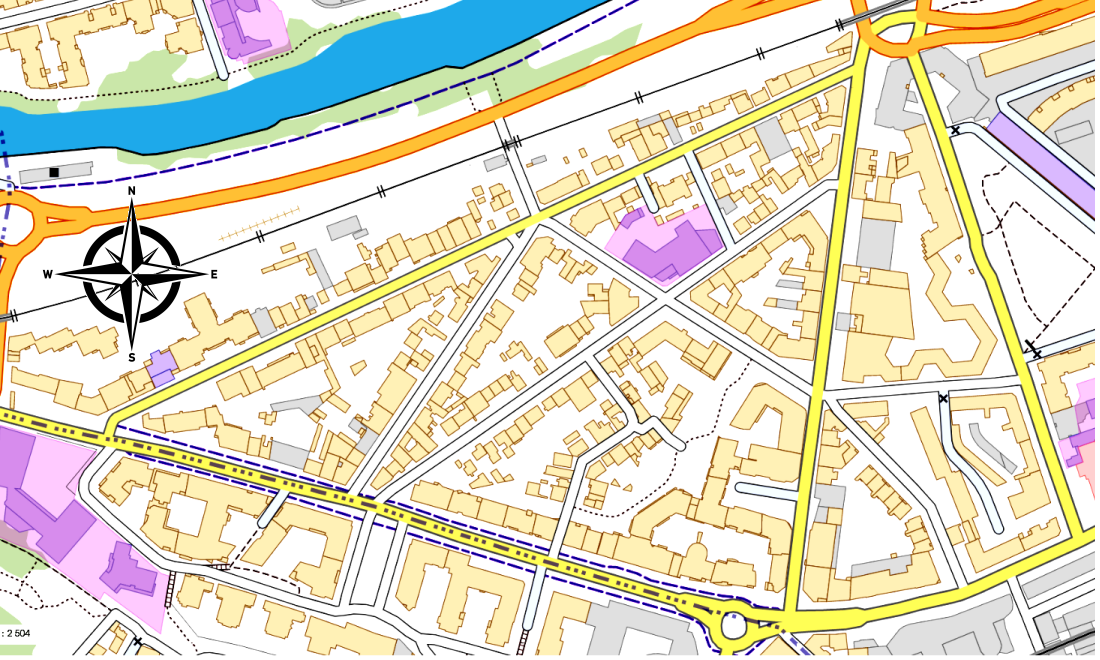}
        \caption{}
        \label{}
    \end{subfigure}
    \begin{subfigure}{0.475\textwidth}
        \centering
        \includegraphics[width=1\textwidth]{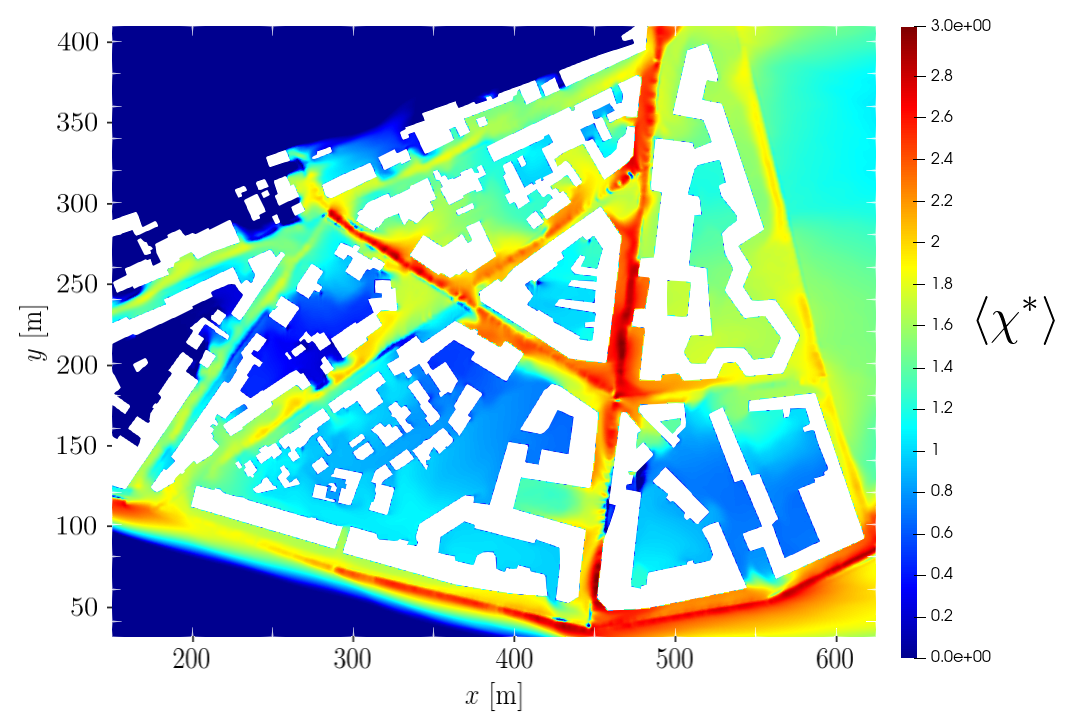}
        \caption{}
        \label{}
    \end{subfigure}
    \caption{Concentration maps from CFD numerical simulations of pollutant dispersion at the urban scale (\cite{Pasquier.2023}). \\ (a) Domain of interest, captured from OpenStreetMaps. \\(b) Time-averaged map of the logarithm of the concentration, denoted $\langle\chi^*\rangle$, obtained from a run of the CFD code with realistic road emissions. The wind is coming from the left in this case.}
    \label{CFDoutputs}
\end{figure}

The modelling chain outputs two-dimensional concentration maps from a set of inputs associated with meteorological and traffic-related uncertainties. These five uncertain inputs are the incoming wind direction $\theta$ [radians] and intensity $U_\infty$ [metres/second], together with the volume of traffic injected into the road network in the urban geometry, $q$ [vehicles/hour], the proportion of diesel and petrol engines in the fleet $\beta$ [-], and the speed limit over the network, $\nu_{max}$ [kilometres/hour]. The modelling chain with the corresponding uncertain variables is displayed in figure \ref{UQworkflow} and the probability distributions associated with each variable are given in table \ref{tab:UQdistributions} and plotted in figure \ref{UQdistributions}.

\begin{figure}[h!]
    \centering
    \includegraphics[width=0.7\textwidth]{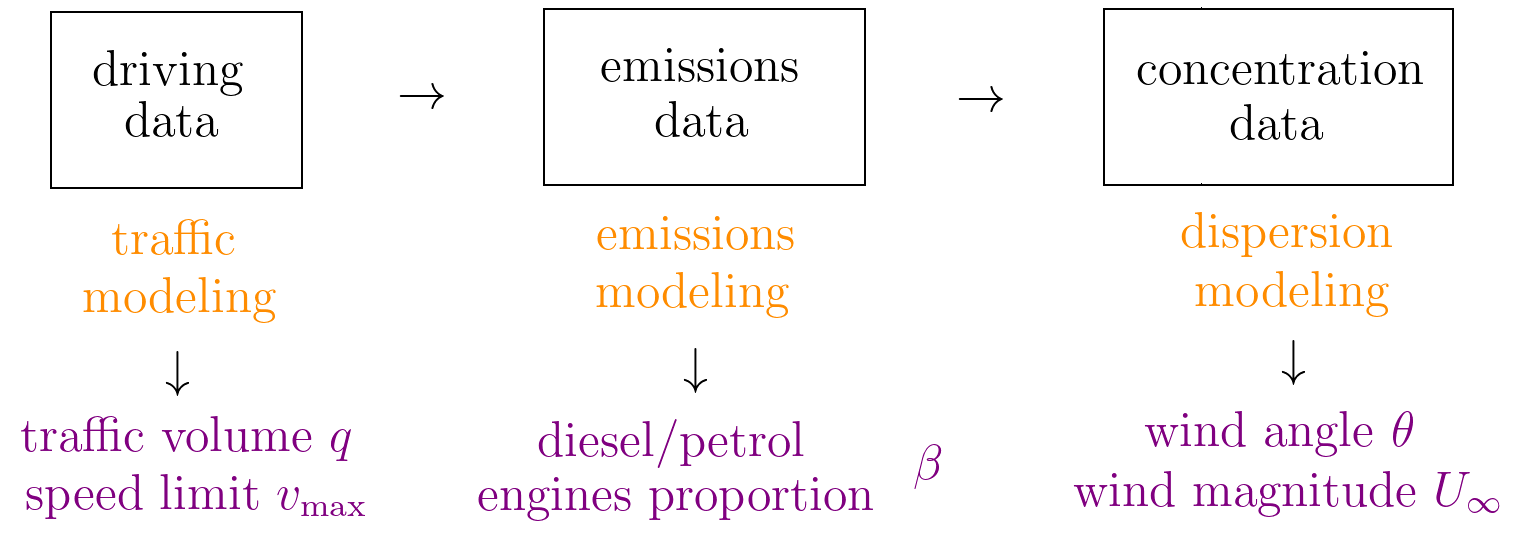}
    \caption{Whole modelling chain designed in \cite{Pasquier.2023} for the calculation of time-averaged concentration maps. The five uncertain parameters considered are indicated under each block.}
    \label{UQworkflow}
\end{figure}

\begin{table}[h!]
    \centering
    \begin{tabular}{|c|c|c|c|}\hline
        variable & distribution & bounds & parameters \\\hline
        $\theta$ & truncated normal & $[5;10]$ [rad] & $\mu=0.7,\sigma=0.5$ \\\hline
        $U_\infty$ & truncated normal & $[-0.35;1.75]$ [m/s] & $\mu=8,\sigma=2$ \\\hline
        $q$ & truncated skewed normal & $[100;500]$ [vehicle/h] & $\xi=450,\omega=100,\alpha=-3$ \\\hline
        $\beta$ & uniform & $[0;1]$ [-] & - \\\hline
        $\nu_{max}$ & uniform & $[30;50]$ [km/h] & - \\\hline
    \end{tabular}
    \caption{Probability distributions and associated parameters for the five uncertain pollutant dispersion variables}
    \label{tab:UQdistributions}
\end{table}

\begin{figure}[h!]
    \centering
    \includegraphics[width=1\textwidth]{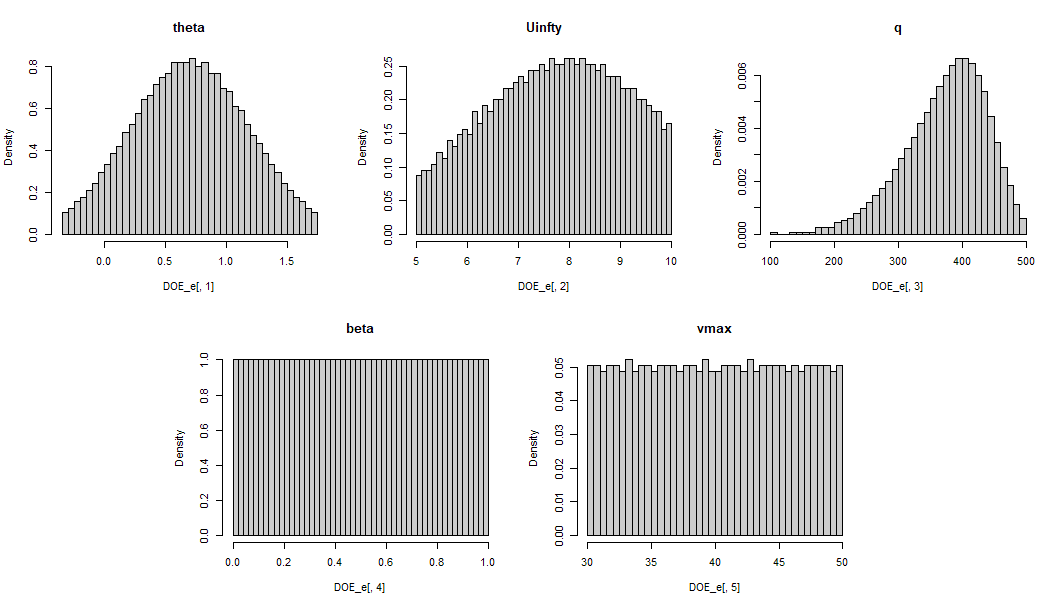}
    \caption{Latin hypercube samples of size $10^4$ of the probability distributions chosen for the five uncertain variables considered in the pollutant dispersion case.}
    \label{UQdistributions}
\end{figure}

To efficiently perform a global sensitivity analysis (SA) to compare the relative influence of these uncertain inputs on the output maps, we construct a metamodel of the entire modelling chain using a combination of principal component analysis and Gaussian process regression (GPR), known in the literature as POD-GPR (\cite{Guo.2018,Nony.2023,Marrel.2014}). The metamodel is trained on a Latin Hypercube Sampling (LHS) design of experiments of $150$ expensive CFD simulations -- each simulation requires about $7$ hours of computational time on 960 cores on the high performance computing facility of IFP Energies Nouvelles -- and validated on a set of $85$ additional simulations. A global coefficient of determination, defined as in \cite{Nony.2023}, is computed to quantitatively evaluate the predictivity of the POD-GPR metamodel, which reaches about $86\%$. The lack of predictivity is related to residual noise in the numerical simulation results due to the difficult statistical convergence of the time-averaged concentration fields in CFD simulations.

Based on this cheap metamodel, we want to perform a global sensitivity analysis for concentration maps, which can be seen as instances of a functional output from $\Ddom \subset \Real^2$ to $\Real$. Initially, sensitivity analysis focused primarily on models with scalar inputs and outputs. Standard methods such as Morris analysis (\cite{morris}) and the Sobol' index (\cite{sobol_global_2001}) were widely used to evaluate the variation of outputs in response to variations in input parameters. Morris analysis, for example, relies on sampling scenarios in which each parameter is modified one at a time to assess its effect on the model's output. However, as modelling and complex systems have evolved, the applications of sensitivity analysis have expanded to include more complex inputs and outputs. Inputs can be vectors, categorical data, time series and more. Outputs can be multi-dimensional, non-scalar or even probability distributions. To meet these needs, new advanced sensitivity analysis techniques have emerged. For example, Sobol' indices have been generalised for vectorial and functional outputs (\cite{vectorial_sobol}). However, the latter uses a truncated expansion of the functional output to then reuse the proposed index on vector outputs, which implies a loss of information. In \cite{GSAspatial}, the authors also propose a method to conduct a global sensitivity analysis in the case of spatial output through functional principal component analysis. \cite{SAINTGEOURS2014153} also perform sensitivity analysis for spatial output by computing maps of sensitivity indices. In \cite{GSA_Wass_universal_index}, the authors propose universal sensitivity indices that can be computed in any metric output space.  The more recent indices based on the Hilbert-Schmidt Independence Criterion (HSIC) were also initially defined for scalar and vector inputs and outputs (\cite{Gretton_hsic1} and \cite{daveiga2013global}), but are much more permissive and can be extended to arbitrary outputs as long as a kernel is defined on the output space. For example, HSIC-based indices have been used for probability distribution outputs in \cite{daveiga:hal-03108628}.

The aim of this paper is to propose and compare different methods for carrying out a sensitivity analysis of a model that produces two-dimensional maps.   We first adopt a pointwise approach, specifically a sensitivity analysis technique using Sobol' indices computed at each point of the map. The resulting Sobol' index maps can be interpreted directly or used to compute \textit{generalised} Sobol' indices, similar to those proposed in \cite{vectorial_sobol}, where the discretised map is considered as a vector. The other proposed approach analyses the spatial output without discretisation and instead aims to establish an index that quantifies the influence of each input on the overall spatial output.  To achieve this, three new SA indices based on sets are introduced and adapted to maps. Each index is tested on the POD-GPR metamodel.

The paper is structured as follows. Section \ref{sec:pointwiseSA} proposes a pointwise method for performing sensitivity analysis on maps, where section \ref{subsec:notations} establishes essential notations and section \ref{subsec:pointwiseSA} defines Sobol' indices at each point of the maps and proposes \textit{generalised} indices that aggregate all pointwise indices. Section \ref{sec:set} is devoted to three new set-based SA indices adapted to maps. Section \ref{subsec:notations2} provides the necessary notations for dealing with sets, while section \ref{subsec:vorob} deals with sensitivity analysis based on random set theory. Section \ref{subsec:univ} adapts universal indices from \cite{GSA_Wass_universal_index} to set-valued outputs, and in section \ref{subsec:kernel} we use indices based on kernels and the Hilbert-Schmidt independence criterion in the context of spatial outputs, as done in \cite{fellmann2023kernelbased}. Section \ref{sec:comparison} is devoted to a comparative study of the various sensitivity indices introduced. Finally, our conclusion (section \ref{sec:conclusion}) ties these sections together and summarises our main findings. 

\newpage
\section{Pointwise sensitivity analysis for maps}
\label{sec:pointwiseSA}
\subsection{Notations}
\label{subsec:notations}
Let us define a map-valued model $\Phi$ by
$$
\begin{matrix}
 & \Udom & \longrightarrow & \Fdom (\Ddom,\Real) \\ 
 \Phi : & \bm u & \mapsto & \Phi_{\bm u} : x_1, x_2 \mapsto \Phi_{\bm u}(x_1, x_2)
\end{matrix}
$$
with $\Ddom \subset \Real^2$, $\Udom = \Udom_1 \times ... \times \Udom_p \subset \Real^p$ and $\Fdom (\Ddom,\Real)$ the space of functions from $\Ddom$ to $\Real$. As in many sensitivity analysis framework, the inputs are assumed to be a random vector $\bm U =(U_1,...,U_p) \in \Udom$ with independent components. It is defined on a probability space $(\Omega , \Fdom, \Proba)$ with known distributions $\Proba_{\bm U} = \bigotimes_{i=1}^p\Proba_{U_i}$. 

In the case of the POD-GPR metamodel of pollutant concentration maps, we have 
\begin{itemize}
    \item $\Udom= [5;10]\times[-0.35;1.75]\times[100;500]\times[0;1]\times[30;50]$ and the $\Proba_{U_i}$ are given in Table \ref{tab:UQdistributions}.
    \item $\Ddom= [0;550]\times [0;400]$ 
    \item $\Phi_{\bm u}(x_1, x_2)$ is the pollutant concentration at position $x_1, x_2 \in \Ddom$ in the physical domain when the uncertain parameters are equal to $\bm u \in \Udom$.
\end{itemize}

\subsection{Sobol' indices maps and generalized Sobol' indices}
\label{subsec:pointwiseSA}
First, we propose to analyse the sensitivity of the concentration fields to the uncertain inputs by calculating the Sobol' indices of the spatial maps. In other words, for each $x_1, x_2 \in \Ddom$, we compute the Sobol' indices of the output concentration $\Phi_{\bm U}(x_1, x_2)$.

At each point $x_1, x_2$, from the ANOVA decomposition of $\bm u \mapsto \Phi_{\bm u}(x_1, x_2)$, a variance decomposition of $\Phi_{\bm U}(x_1, x_2)$ can be obtained. The first-order Sobol' indices are defined from the first terms of the ANOVA decomposition, each of which depends on a single uncertain variable. The first-order index corresponding to the $i$-th uncertain variable is defined as 
\[
    S_i(x_1, x_2) = \frac{\operatorname{Var}[\mathbb{E}[\Phi_{\bm U}(x_1, x_2)\lvert U_i]]}{\operatorname{Var}[\Phi_{\bm U}(x_1, x_2)]}.
\]
These indices indicate the proportion of the total variance that can be attributed to the individual contribution of each uncertain variable $U_i$. 

Such first-order indices are computed using the standard partial variance estimator recalled in \cite{Saltelli.2010} and defined as
\begin{equation}
\label{eq:pickandfreeze}
    \operatorname{Var}[\mathbb{E}[\Phi_{\bm U}(x_1, x_2)\lvert U_i]] \approx \frac{1}{N} \sum_{j=1}^N \Phi_{\bm U'^{(j)}}(x_1, x_2) \left(\Phi_{\widetilde{\bm U}^{i(j)}}(x_1, x_2) - \Phi_{\bm U^{(j)}}(x_1, x_2)\right), 
\end{equation}
where $(\bm U^{(j)},\bm U'^{(j)})_{j=1}^N$ are independent and identically distributed (iid) samples of $(\bm U, \bm U')$, with $\bm U'$ being an independent copy of $\bm U$. The matrix $\widetilde{\bm U}^{i }$ is the matrix $\bm U$ with the $i$-th column replaced by that of $\bm U'$. The denominator $\operatorname{Var}[\Phi_{\bm U}(x_1, x_2)]$ is estimated using the classical unbiased sample variance estimator. The indices are estimated for each location $\left\{x^{(j)}_1, x^{(j)}_2\right\}_{j=1}^m$ of a discretised computational grid composed of $m$ nodes to obtain two-dimensional first-order sensitivity maps, as shown in Figure \ref{firstOrderSobolMaps}.

\begin{figure}[H]
    \centering
    \includegraphics[width=1\textwidth]{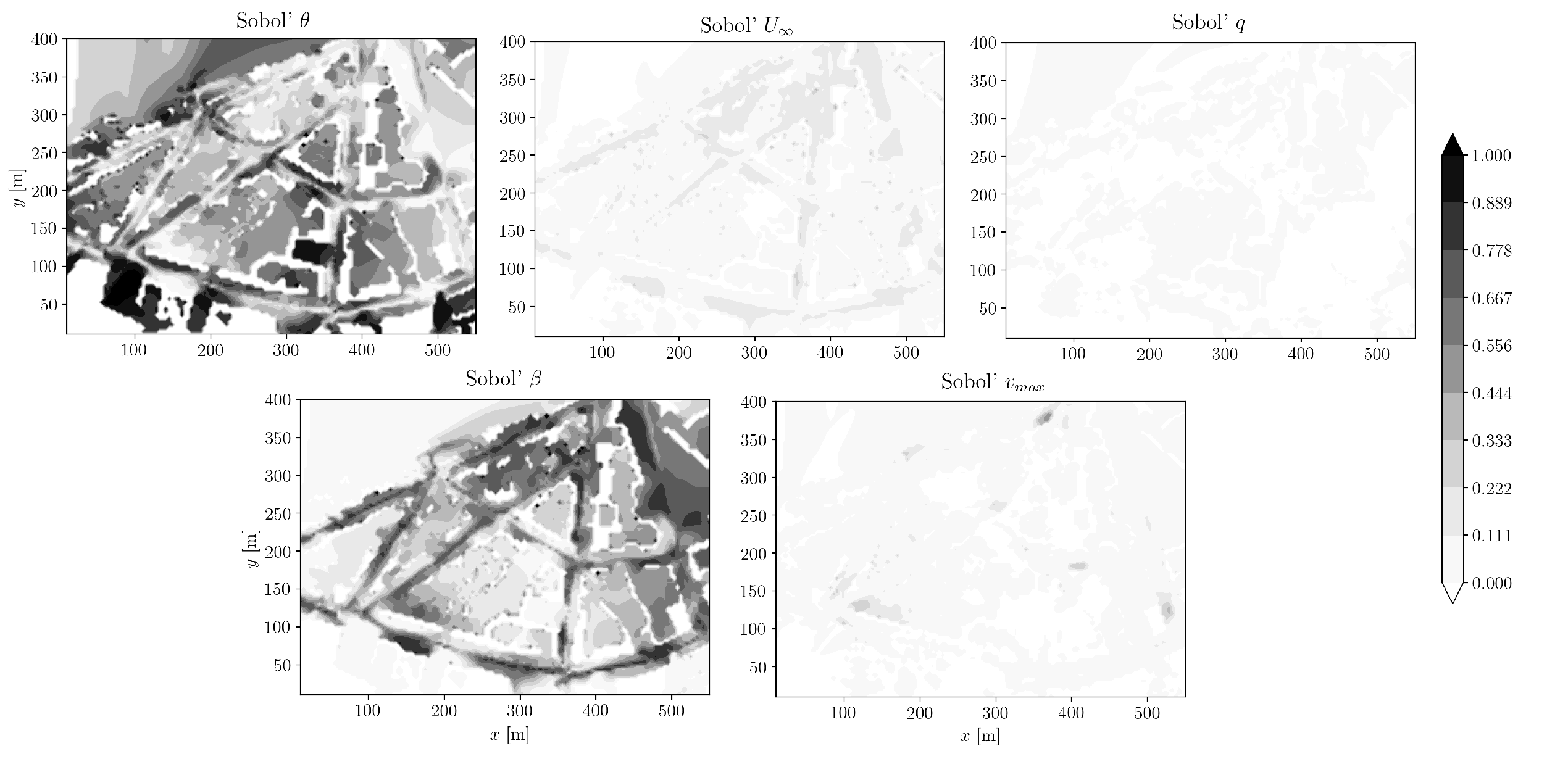}
    \caption{Two-dimensional maps of pointwise first-order Sobol' indices for time-averaged pollutant concentration, estimated by quasi-Monte-Carlo using $2^{10}$ sample points and Halton sequences.}
    \label{firstOrderSobolMaps}
\end{figure}

The information provided by such two-dimensional maps can be aggregated into scalar values by weighting the grid values of the first-order Sobol' indices with the corresponding estimated variances. They correspond to the vectorial Sobol' indices defined in \cite{vectorial_sobol}. Such \textit{generalised} Sobol' indices are defined as follows:
\[
    S^\text{gen}_{i} := \sum_{j=1}^m w_j S_{i}(x^{(j)}_1, x^{(j)}_2) \quad\text{with}\quad \quad w_j = \frac{\operatorname{Var}[\Phi_{\bm U}(x^{(j)}_1, x^{(j)}_2)]}{\sum_{k=1}^m \operatorname{Var}[\Phi_{\bm U}(x^{(k)}_1, x^{(k)}_2)]}.
\]

\begin{table}[h!]
    \centering
    \begin{tabular}{|c|c|c|c|c|}\hline
        $S^\text{gen}_\theta$ & $S^\text{gen}_{U_\infty}$ & $S^\text{gen}_q$ & $S^\text{gen}_\beta$ & $S^\text{gen}_{\nu_{max}}$ \\\hline
        $5.48\times10^{-1}$ & $4.47\times10^{-2}$ & $7.56\times10^{-4}$ & $3.22\times10^{-1}$ & $1.73\times10^{-2}$ \\\hline
    \end{tabular}
    \caption{Estimated generalised first-order Sobol' indices for each uncertain variable from two-dimensional sensitivity maps, using $2^{12}$ sample points from Halton sequences.}
    \label{estimGeneralizedSobolFirstOrder}
\end{table}

\begin{figure}
    \centering
\begin{tikzpicture}[x=1pt,y=1pt]
\definecolor{fillColor}{RGB}{255,255,255}
\path[use as bounding box,fill=fillColor,fill opacity=0.00] (0,0) rectangle (397.48,252.94);
\begin{scope}
\path[clip] (  0.00,  0.00) rectangle (397.48,252.94);
\definecolor{drawColor}{RGB}{255,255,255}
\definecolor{fillColor}{RGB}{255,255,255}

\path[draw=drawColor,line width= 0.6pt,line join=round,line cap=round,fill=fillColor] (  0.00,  0.00) rectangle (397.48,252.94);
\end{scope}
\begin{scope}
\path[clip] ( 46.55, 18.22) rectangle (391.98,247.44);
\definecolor{fillColor}{gray}{0.92}

\path[fill=fillColor] ( 46.55, 18.22) rectangle (391.98,247.44);
\definecolor{drawColor}{RGB}{255,255,255}

\path[draw=drawColor,line width= 0.3pt,line join=round] ( 46.55, 66.01) --
	(391.98, 66.01);

\path[draw=drawColor,line width= 0.3pt,line join=round] ( 46.55,139.88) --
	(391.98,139.88);

\path[draw=drawColor,line width= 0.3pt,line join=round] ( 46.55,213.75) --
	(391.98,213.75);

\path[draw=drawColor,line width= 0.6pt,line join=round] ( 46.55, 29.08) --
	(391.98, 29.08);

\path[draw=drawColor,line width= 0.6pt,line join=round] ( 46.55,102.95) --
	(391.98,102.95);

\path[draw=drawColor,line width= 0.6pt,line join=round] ( 46.55,176.82) --
	(391.98,176.82);

\path[draw=drawColor,line width= 0.6pt,line join=round] ( 86.41, 18.22) --
	( 86.41,247.44);

\path[draw=drawColor,line width= 0.6pt,line join=round] (152.84, 18.22) --
	(152.84,247.44);

\path[draw=drawColor,line width= 0.6pt,line join=round] (219.27, 18.22) --
	(219.27,247.44);

\path[draw=drawColor,line width= 0.6pt,line join=round] (285.70, 18.22) --
	(285.70,247.44);

\path[draw=drawColor,line width= 0.6pt,line join=round] (352.13, 18.22) --
	(352.13,247.44);
\definecolor{drawColor}{RGB}{210,105,30}
\definecolor{fillColor}{RGB}{210,105,30}

\path[draw=drawColor,line width= 0.4pt,line join=round,line cap=round,fill=fillColor] (219.27,231.30) circle (  2.50);

\path[draw=drawColor,line width= 0.4pt,line join=round,line cap=round,fill=fillColor] (352.13, 45.60) circle (  2.50);

\path[draw=drawColor,line width= 0.4pt,line join=round,line cap=round,fill=fillColor] (285.70, 29.36) circle (  2.50);

\path[draw=drawColor,line width= 0.4pt,line join=round,line cap=round,fill=fillColor] ( 86.41,148.03) circle (  2.50);

\path[draw=drawColor,line width= 0.4pt,line join=round,line cap=round,fill=fillColor] (152.84, 35.47) circle (  2.50);

\path[draw=drawColor,line width= 0.6pt,line join=round] (209.30,237.03) --
	(229.23,237.03);

\path[draw=drawColor,line width= 0.6pt,line join=round] (219.27,237.03) --
	(219.27,224.69);

\path[draw=drawColor,line width= 0.6pt,line join=round] (209.30,224.69) --
	(229.23,224.69);

\path[draw=drawColor,line width= 0.6pt,line join=round] (342.16, 53.09) --
	(362.09, 53.09);

\path[draw=drawColor,line width= 0.6pt,line join=round] (352.13, 53.09) --
	(352.13, 35.47);

\path[draw=drawColor,line width= 0.6pt,line join=round] (342.16, 35.47) --
	(362.09, 35.47);

\path[draw=drawColor,line width= 0.6pt,line join=round] (275.73, 29.96) --
	(295.66, 29.96);

\path[draw=drawColor,line width= 0.6pt,line join=round] (285.70, 29.96) --
	(285.70, 28.64);

\path[draw=drawColor,line width= 0.6pt,line join=round] (275.73, 28.64) --
	(295.66, 28.64);

\path[draw=drawColor,line width= 0.6pt,line join=round] ( 76.44,168.96) --
	( 96.37,168.96);

\path[draw=drawColor,line width= 0.6pt,line join=round] ( 86.41,168.96) --
	( 86.41,128.87);

\path[draw=drawColor,line width= 0.6pt,line join=round] ( 76.44,128.87) --
	( 96.37,128.87);

\path[draw=drawColor,line width= 0.6pt,line join=round] (142.87, 39.88) --
	(162.80, 39.88);

\path[draw=drawColor,line width= 0.6pt,line join=round] (152.84, 39.88) --
	(152.84, 31.50);

\path[draw=drawColor,line width= 0.6pt,line join=round] (142.87, 31.50) --
	(162.80, 31.50);
\end{scope}
\begin{scope}
\path[clip] (  0.00,  0.00) rectangle (397.48,252.94);
\definecolor{drawColor}{gray}{0.30}

\node[text=drawColor,anchor=base east,inner sep=0pt, outer sep=0pt, scale=  0.88] at ( 41.60, 26.05) {0.0};

\node[text=drawColor,anchor=base east,inner sep=0pt, outer sep=0pt, scale=  0.88] at ( 41.60, 99.92) {0.2};

\node[text=drawColor,anchor=base east,inner sep=0pt, outer sep=0pt, scale=  0.88] at ( 41.60,173.79) {0.4};
\end{scope}
\begin{scope}
\path[clip] (  0.00,  0.00) rectangle (397.48,252.94);
\definecolor{drawColor}{gray}{0.20}

\path[draw=drawColor,line width= 0.6pt,line join=round] ( 43.80, 29.08) --
	( 46.55, 29.08);

\path[draw=drawColor,line width= 0.6pt,line join=round] ( 43.80,102.95) --
	( 46.55,102.95);

\path[draw=drawColor,line width= 0.6pt,line join=round] ( 43.80,176.82) --
	( 46.55,176.82);
\end{scope}
\begin{scope}
\path[clip] (  0.00,  0.00) rectangle (397.48,252.94);
\definecolor{drawColor}{gray}{0.20}

\path[draw=drawColor,line width= 0.6pt,line join=round] ( 86.41, 15.47) --
	( 86.41, 18.22);

\path[draw=drawColor,line width= 0.6pt,line join=round] (152.84, 15.47) --
	(152.84, 18.22);

\path[draw=drawColor,line width= 0.6pt,line join=round] (219.27, 15.47) --
	(219.27, 18.22);

\path[draw=drawColor,line width= 0.6pt,line join=round] (285.70, 15.47) --
	(285.70, 18.22);

\path[draw=drawColor,line width= 0.6pt,line join=round] (352.13, 15.47) --
	(352.13, 18.22);
\end{scope}
\begin{scope}
\path[clip] (  0.00,  0.00) rectangle (397.48,252.94);
\definecolor{drawColor}{gray}{0.30}

\node[text=drawColor,anchor=base,inner sep=0pt, outer sep=0pt, scale=  0.88] at ( 86.41,  7.21) {$\beta$};

\node[text=drawColor,anchor=base,inner sep=0pt, outer sep=0pt, scale=  0.88] at (152.84,  7.21) {$\nu_{max}$};

\node[text=drawColor,anchor=base,inner sep=0pt, outer sep=0pt, scale=  0.88] at (219.27,  7.21) {$\theta$};

\node[text=drawColor,anchor=base,inner sep=0pt, outer sep=0pt, scale=  0.88] at (285.70,  7.21) {$q$};

\node[text=drawColor,anchor=base,inner sep=0pt, outer sep=0pt, scale=  0.88] at (352.13,  7.21) {$U_{\infty}$};
\end{scope}
\begin{scope}
\path[clip] (  0.00,  0.00) rectangle (397.48,252.94);
\definecolor{drawColor}{RGB}{0,0,0}

\node[text=drawColor,anchor=base,inner sep=0pt, outer sep=0pt, scale=  1.10] at ( 16.55,129.05) {$S^{gen}_i$};
\end{scope}
\end{tikzpicture}
    \caption{Estimated generalised first-order Sobol' indices for each uncertain variable from two-dimensional sensitivity maps, using $2^{12}$ sample points from Halton sequences and $100$ bootstrap resamples to construct $95\%$ confidence intervals.}
    \label{generalizedIndices}
\end{figure}
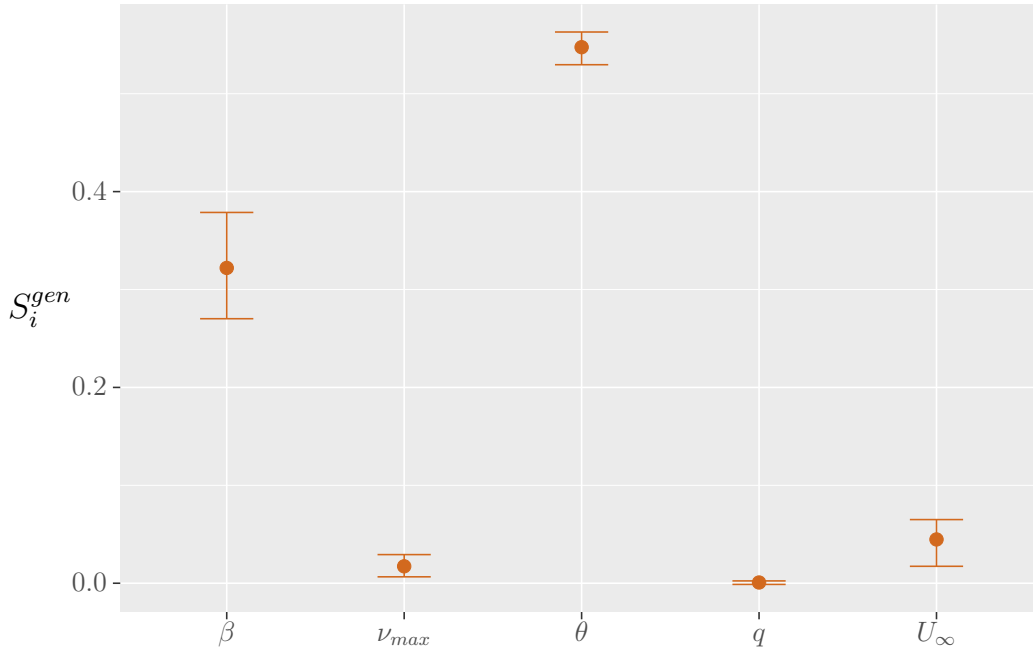

From the results shown in Figure \ref{firstOrderSobolMaps} and \ref{generalizedIndices} it can be seen that the wind angle and the rate of diesel engines in the fleet contribute to the major part of the variance of the output concentration field. The generalised indices reach approximately $55\%$ and $32 \%$ for these two variables respectively, while the other indices fall below $10\%$, and in particular the value of $S^\text{gen}_q$ is very close to zero, showing that the traffic volume $q$ is not influential in this study. The two-dimensional Sobol' maps highlight the fact that the sensitivity to the two dominant variables varies depending on the location in the urban neighbourhood: thus, if we choose to focus our analysis on a specific part of the domain - e.g. a specific building such as a hospital or a primary school - we may end up with a different hierarchy among the variables. We also carried out a second analysis (the results of which are not shown here) by fixing $\theta$ and $\beta$ to their mean values and evaluating the Sobol' maps for the three remaining variables. This showed that the wind intensity dominates over the speed limit (with a generalised index of about $70\%$ for the former and $30\%$ for the latter) and the traffic volume remains negligible. We can also observe that the indices sum to about $95\%$, which means that there are almost no interactions between the inputs.


Although this sensitivity analysis is performed point by point of the pollutant concentration map, it allows a first screening of the influencing variables and, in particular, generalised indices allow a quantitative ranking of the inputs. The next section presents another approach that considers the output as a subset of $\Real^3$ and performs a sensitivity analysis for such outputs.

\section{New sensitivity analysis indices for sets}


\label{sec:set}
In this section we propose three types of indices designed to perform sensitivity analysis for sets, which we use on map-valued outputs. The first section \ref{subsec:notations2} introduces the notations to see map-valued models as examples of set-valued models. Then, three sections are dedicated to introduce the three indices. In section \ref{subsec:vorob}, we propose new indices based on Vorob'ev's theory of random sets (\cite{molchanov_random}). In section \ref{subsec:univ} we adapt the universal indices of \cite{GSA_Wass_universal_index} to sets. Finally, in Section \ref{subsec:kernel}, we present indices based on the Hilbert-Schmidt Independence Criterion (HSIC) in the context of set-valued output, as done in \cite{fellmann2023kernelbased}. 

\subsection{Notations}
\label{subsec:notations2}
 We want to perform a sensitivity analysis of a non-discretised map-valued model $\Phi$. To do this, we propose to consider $\Phi$ as a set-valued model $\Psi$ defined by:
$$
\begin{matrix}
 & \Udom & \longrightarrow & \mathcal L(\Xdom) \\ 
 \Psi : & \bm u & \mapsto & \Gamma_{\bm u}=\{(x_1, x_2,c) \in \Ddom \times \Cdom , c-\Phi_{\bm u}(x_1, x_2)\leq 0 \} 
\end{matrix}
$$
where $\Cdom=[c_{min}, c_{max}] $ and $\mathcal L(\Xdom)$ is the space of Lebesgue measurable subsets of $\Xdom=\Ddom \times \Cdom$. $\Gamma = \Psi(U)$ is then a random set that depends directly on the inputs $U_i$ whose effect is to be measured.  

For estimation purposes, $\Xdom$ is discretised with a grid of $m$ points denoted $(\bm x^{(1)},...,\bm x^{(m)})$. This allows the volume $\lambda(A)$ of $A \subset \Xdom$ to be estimated as $$\hat{\lambda}_m (A)= \frac{1}{m}\sum_{k=1}^{m} \mathbbold 1_{A}(\bm x^{(l)}).$$ Let also $(\bm U^{(i)},\Gamma^{(i)}), i=1,...,n$ be an independent and identically distributed (i.i.d.) sample of $(\bm U,\Gamma)$ with $\Gamma=\Psi({\bm U})$.

\subsection{Sensitivity analysis based on random set theory}
\label{subsec:vorob}
Working with sets instead of scalars or vectors makes the study of random elements much more difficult. For example, expectations and variance are not easy to define. In \cite{molchanov_random}, a complete theory of random sets is developed with definitions of expectations, medians and deviations of random sets. In particular, based on the Vorob'ev median and deviation, we propose to define indices inspired by Sobol' indices but adapted to random sets. 

When dealing with a scalar output $Y$, first-order Sobol' indices $S_i$ can be defined by :
$$S_i=1-\frac{ \Esp \left[ \operatorname{Var} [ Y | U_i ]\right] }{\operatorname{Var} Y}.$$
Here we want to define similar indices, but for set-valued outputs. To do this, we propose to replace the variance with the Vorob'ev median deviation, which is a possible adaptation of the median deviation to random sets. This requires defining the Vorob'ev median and the Vorob'ev median deviation, but also defining a Vorob'ev conditional median deviation.

\begin{definition}[Vorob'ev median]
    The Vorob'ev median $Q_{0.5}(\Gamma)$ of a random set $\Gamma$ is defined as the set of points whose probability of being in $\Gamma$ is at least $0.5$, i.e.
    $$
    Q_{0.5}(\Gamma)= \{x \in \Xdom, ~ \Proba ( x \in \Gamma) \geq 0.5 \}$$
\end{definition}

Using the Vorob'ev median, the Vorob'ev Median Deviation is defined as the Vorob'ev deviation between the random set and its median. We also propose conditional version of the Vorob'ev Median Deviation.
\begin{definition}
    The Vorob'ev Median Deviation of a random set $\Gamma$, denoted $\operatorname{VMD}(\Gamma)$, is defined by:
    $$
    \operatorname{VMD}(\Gamma)=\Esp [ \lambda ( \Gamma \Delta Q_{0.5}) ]. $$
    where $\Delta$ is the symmetric difference.
    The Vorob'ev conditional median deviation is the random variable $\operatorname{VMD}(\Gamma | U_i)$ defined by:
$$\operatorname{VMD}(\Gamma | U_i)=\Esp [ \lambda ( \Gamma \Delta Q^i_{0.5}) | U_i ]$$
where $Q^i_{0.5}$ is the Vorob'ev conditional median defined by:
$$
Q^i_{0.5}=\{ \bm x \in \Xdom, \Proba ( \bm x \in \Gamma | U_i ) \geq 0.5 \}. $$
\end{definition}

We can now define indices that describe the part of the Vorob'ev median deviation of a random set that is due to an input $U_i$.
\begin{definition}
Using the previous definition, we define a sensitivity index $S^i_V$ by:
  \begin{align*}
      S_i^{V}:&=1-\frac{\Esp \left[ \operatorname{VMD}(\Gamma | U_i) \right]}{\operatorname{VMD}(\Gamma)}\\
      &=1-\frac{\mathbb{E}[\lambda(\Gamma \Delta Q^i_{0.5})]}{\mathbb{E}[\lambda(\Gamma \Delta Q_{0.5})]}
  \end{align*}
 \end{definition}%
 This index quantifies the effect of $U_i$ on the output $\Gamma$, since it's zero if $\Gamma$ and $U_i$ are independent (simply as $Q^i_{0.5}=Q_{0.5}$ if $\Gamma$ and $U_i$ are independent). However, as first-order Sobol' indices, the converse doesn't hold. We show in Proposition \ref{prop_vorob} that it is between zero and one, as desired for sensitivity indices. The interpretation is close to a part of the total deviation explained by an input. However, there is no decomposition of the indices and they could sum to more than $1$.
\begin{prop}
\label{prop_vorob}
    $0 \leq S_i^{V} \leq 1$
\end{prop}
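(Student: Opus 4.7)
The plan is to prove the two bounds separately. The upper bound $S_i^V \leq 1$ is immediate: since $\lambda$ is non-negative, $\operatorname{VMD}(\Gamma \mid U_i) \geq 0$ almost surely, hence $\mathbb{E}[\operatorname{VMD}(\Gamma \mid U_i)]/\operatorname{VMD}(\Gamma) \geq 0$, which gives $S_i^V \leq 1$ (provided $\operatorname{VMD}(\Gamma) > 0$, which can be assumed to avoid degeneracy).

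For the lower bound $S_i^V \geq 0$, the key tool is the classical minimization property of the Vorob'ev median: for any random set $\Gamma$ and any deterministic measurable set $A \subset \Xdom$, Fubini--Tonelli gives
\begin{equation*}
\mathbb{E}[\lambda(\Gamma \Delta A)] = \int_{A} \bigl(1 - \Proba(x \in \Gamma)\bigr)\, dx + \int_{A^c} \Proba(x \in \Gamma)\, dx,
\end{equation*}
and this quantity is minimized by including $x$ in $A$ precisely when $\Proba(x \in \Gamma) \geq 1/2$, i.e.\ by $A = Q_{0.5}(\Gamma)$ (because pointwise one picks the smaller of $1 - p(x)$ and $p(x)$). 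So $Q_{0.5}$ minimizes $A \mapsto \mathbb{E}[\lambda(\Gamma \Delta A)]$ over deterministic sets.

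Next, I would apply this same principle conditionally on $U_i$. Conditioning on $U_i$, the set $Q^i_{0.5}$ is the threshold-$1/2$ level set of the conditional coverage function $x \mapsto \Proba(x \in \Gamma \mid U_i)$, and the analogous argument (with conditional expectation in place of expectation) shows that for any $\sigma(U_i)$-measurable random set $A(U_i)$,
\begin{equation*}
\Esp\bigl[\lambda(\Gamma \Delta Q^i_{0.5}) \mid U_i\bigr] \leq \Esp\bigl[\lambda(\Gamma \Delta A(U_i)) \mid U_i\bigr] \quad \text{a.s.}
\end{equation*}
Taking $A(U_i) \equiv Q_{0.5}$ (a constant, hence trivially $\sigma(U_i)$-measurable) and then taking expectations yields
\begin{equation*}
\Esp[\operatorname{VMD}(\Gamma \mid U_i)] = \Esp[\lambda(\Gamma \Delta Q^i_{0.5})] \leq \Esp[\lambda(\Gamma \Delta Q_{0.5})] = \operatorname{VMD}(\Gamma),
\end{equation*}
so $S_i^V \geq 0$.

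The main obstacle, such as it is, is establishing the minimization characterization of the Vorob'ev median cleanly: it is classical (see Molchanov) but relies on the Fubini exchange between $\Proba$ and $\lambda$ and on the pointwise optimality of the $1/2$-threshold rule. Once that is in hand, applying it once unconditionally for the denominator and once conditionally on $U_i$ for the numerator, then comparing the two via the tower property, delivers both bounds in a few lines.
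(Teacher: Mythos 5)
Your proof is correct and follows essentially the same route as the paper: the upper bound from non-negativity of the ratio, and the lower bound by applying the Vorob'ev median's minimization property conditionally on $U_i$ with $Q_{0.5}$ as the competitor, then taking expectations. The only difference is that you derive the minimization property explicitly via Fubini--Tonelli and the pointwise $1/2$-threshold rule, whereas the paper simply cites Proposition 2.4 of Molchanov for it.
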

\begin{proof}
    It is clear that $S^i_{V}\leq 1$. To show that it's non-negative, we use the tower property of the conditional expectation on the numerator and the denominator :
    $$\frac{\mathbb{E}[\lambda(\Gamma \Delta Q^i_{0.5})]}{\mathbb{E}[\lambda(\Gamma \Delta Q_{0.5})]} = \frac{\mathbb E\mathbb{E}[\lambda(\Gamma \Delta Q^i_{0.5})|U_i]}{\mathbb E\mathbb{E}[\lambda(\Gamma \Delta Q_{0.5})|U_i]}.$$
    Then we show that $$\mathbb{E}[\lambda(\Gamma \Delta Q^i_{0.5})|U_i]\leq \mathbb{E}[\lambda(\Gamma \Delta Q_{0.5})|U_i]$$ almost surely, using the fact that the Vorob'ev median minimises the deviation for any measurable sets (see Proposition 2.4 of \cite{molchanov_random}). Finally by taking the expectation we conclude that $S_i^V$ is non-negative.
\end{proof}

 The estimation of $S_i^V$ is done by a double loop, since we are not aware of any cheaper estimator. Let $U_i^{(j)}, ~j \in (1,n)$ be $n$ independent copies of a given input $U_i$, the conditional median $Q^{i}_{0.5}(U_i^{(j)})=\{ x \in \Xdom, \Proba ( x \in \Gamma | U_i= U_i^{(j)}) \geq 0.5 \}$ is estimated by independently drawing another sample $U_{-i}^{(j,l))}, ~ l \in (1,n)$ of $U_{-i}$ and evaluating the output sets $\Gamma_{l}(U_i^{(j)})= \Psi(U_i^{(j)},U_{-i}^{(j,l)})$. The estimator of the conditional Vorob'ev median can then be derived:
$$
\hat{Q}^{i}_{0.5}(U_i^{(j)}):=\{ x \in \Xdom, \frac{1}{n} \sum_{l=1}^n \mathbbold 1 _{\Gamma_{l}(U_i^{(j)})}(x)\geq 0.5 \}
$$

Then $S_i^V$ is estimated by:
\begin{equation}
\label{eq:vorob}
\tilde{S}_i^V=1-\frac{\sum_{j=1}^{n} \hat{\lambda}_m ( \Gamma^{(j)} \Delta \hat{Q}^{i}_{0.5}(U_i^{(j)}))}{\sum_{j=1}^{n} \hat{\lambda}_m ( \Gamma^{(j)} \Delta \hat{Q}_{0.5})}.
\end{equation}
where $\Gamma^{(j)} = \Psi(U_i^{(j)}, U_{-i}^{(j)})$. 

The index is estimated by $n=32$ model evaluations per loop, resulting in a total of $n^2=1024$ model evaluations. Confidence intervals are obtained by bootstrapping with 100 resamples. The results are shown in Figure \ref{fig:vorob}.

\begin{figure}
    \centering
\begin{tikzpicture}[x=1pt,y=1pt]
\definecolor{fillColor}{RGB}{255,255,255}
\path[use as bounding box,fill=fillColor,fill opacity=0.00] (0,0) rectangle (397.48,252.94);
\begin{scope}
\path[clip] (  0.00,  0.00) rectangle (397.48,252.94);
\definecolor{drawColor}{RGB}{255,255,255}
\definecolor{fillColor}{RGB}{255,255,255}

\path[draw=drawColor,line width= 0.6pt,line join=round,line cap=round,fill=fillColor] (  0.00,  0.00) rectangle (397.48,252.94);
\end{scope}
\begin{scope}
\path[clip] ( 39.46, 18.22) rectangle (391.98,247.44);
\definecolor{fillColor}{gray}{0.92}

\path[fill=fillColor] ( 39.46, 18.22) rectangle (391.98,247.44);
\definecolor{drawColor}{RGB}{255,255,255}

\path[draw=drawColor,line width= 0.3pt,line join=round] ( 39.46, 60.03) --
	(391.98, 60.03);

\path[draw=drawColor,line width= 0.3pt,line join=round] ( 39.46,107.55) --
	(391.98,107.55);

\path[draw=drawColor,line width= 0.3pt,line join=round] ( 39.46,155.07) --
	(391.98,155.07);

\path[draw=drawColor,line width= 0.3pt,line join=round] ( 39.46,202.59) --
	(391.98,202.59);

\path[draw=drawColor,line width= 0.6pt,line join=round] ( 39.46, 36.27) --
	(391.98, 36.27);

\path[draw=drawColor,line width= 0.6pt,line join=round] ( 39.46, 83.79) --
	(391.98, 83.79);

\path[draw=drawColor,line width= 0.6pt,line join=round] ( 39.46,131.31) --
	(391.98,131.31);

\path[draw=drawColor,line width= 0.6pt,line join=round] ( 39.46,178.83) --
	(391.98,178.83);

\path[draw=drawColor,line width= 0.6pt,line join=round] ( 39.46,226.35) --
	(391.98,226.35);

\path[draw=drawColor,line width= 0.6pt,line join=round] ( 80.14, 18.22) --
	( 80.14,247.44);

\path[draw=drawColor,line width= 0.6pt,line join=round] (147.93, 18.22) --
	(147.93,247.44);

\path[draw=drawColor,line width= 0.6pt,line join=round] (215.72, 18.22) --
	(215.72,247.44);

\path[draw=drawColor,line width= 0.6pt,line join=round] (283.52, 18.22) --
	(283.52,247.44);

\path[draw=drawColor,line width= 0.6pt,line join=round] (351.31, 18.22) --
	(351.31,247.44);
\definecolor{drawColor}{RGB}{210,105,30}
\definecolor{fillColor}{RGB}{210,105,30}

\path[draw=drawColor,line width= 0.4pt,line join=round,line cap=round,fill=fillColor] ( 80.14,165.78) circle (  2.50);

\path[draw=drawColor,line width= 0.4pt,line join=round,line cap=round,fill=fillColor] (147.93, 41.12) circle (  2.50);

\path[draw=drawColor,line width= 0.4pt,line join=round,line cap=round,fill=fillColor] (215.72, 37.37) circle (  2.50);

\path[draw=drawColor,line width= 0.4pt,line join=round,line cap=round,fill=fillColor] (283.52,167.08) circle (  2.50);

\path[draw=drawColor,line width= 0.4pt,line join=round,line cap=round,fill=fillColor] (351.31, 44.26) circle (  2.50);

\path[draw=drawColor,line width= 0.6pt,line join=round] ( 69.97,237.03) --
	( 90.30,237.03);

\path[draw=drawColor,line width= 0.6pt,line join=round] ( 80.14,237.03) --
	( 80.14,115.16);

\path[draw=drawColor,line width= 0.6pt,line join=round] ( 69.97,115.16) --
	( 90.30,115.16);

\path[draw=drawColor,line width= 0.6pt,line join=round] (137.76, 83.79) --
	(158.10, 83.79);

\path[draw=drawColor,line width= 0.6pt,line join=round] (147.93, 83.79) --
	(147.93, 28.64);

\path[draw=drawColor,line width= 0.6pt,line join=round] (137.76, 28.64) --
	(158.10, 28.64);

\path[draw=drawColor,line width= 0.6pt,line join=round] (205.55, 68.07) --
	(225.89, 68.07);

\path[draw=drawColor,line width= 0.6pt,line join=round] (215.72, 68.07) --
	(215.72, 35.08);

\path[draw=drawColor,line width= 0.6pt,line join=round] (205.55, 35.08) --
	(225.89, 35.08);

\path[draw=drawColor,line width= 0.6pt,line join=round] (273.35,234.20) --
	(293.68,234.20);

\path[draw=drawColor,line width= 0.6pt,line join=round] (283.52,234.20) --
	(283.52,120.92);

\path[draw=drawColor,line width= 0.6pt,line join=round] (273.35,120.92) --
	(293.68,120.92);

\path[draw=drawColor,line width= 0.6pt,line join=round] (341.14, 80.69) --
	(361.48, 80.69);

\path[draw=drawColor,line width= 0.6pt,line join=round] (351.31, 80.69) --
	(351.31, 34.94);

\path[draw=drawColor,line width= 0.6pt,line join=round] (341.14, 34.94) --
	(361.48, 34.94);
\end{scope}
\begin{scope}
\path[clip] (  0.00,  0.00) rectangle (397.48,252.94);
\definecolor{drawColor}{gray}{0.30}

\node[text=drawColor,anchor=base east,inner sep=0pt, outer sep=0pt, scale=  0.88] at ( 34.51, 33.24) {0.0};

\node[text=drawColor,anchor=base east,inner sep=0pt, outer sep=0pt, scale=  0.88] at ( 34.51, 80.76) {0.1};

\node[text=drawColor,anchor=base east,inner sep=0pt, outer sep=0pt, scale=  0.88] at ( 34.51,128.28) {0.2};

\node[text=drawColor,anchor=base east,inner sep=0pt, outer sep=0pt, scale=  0.88] at ( 34.51,175.80) {0.3};

\node[text=drawColor,anchor=base east,inner sep=0pt, outer sep=0pt, scale=  0.88] at ( 34.51,223.32) {0.4};
\end{scope}
\begin{scope}
\path[clip] (  0.00,  0.00) rectangle (397.48,252.94);
\definecolor{drawColor}{gray}{0.20}

\path[draw=drawColor,line width= 0.6pt,line join=round] ( 36.71, 36.27) --
	( 39.46, 36.27);

\path[draw=drawColor,line width= 0.6pt,line join=round] ( 36.71, 83.79) --
	( 39.46, 83.79);

\path[draw=drawColor,line width= 0.6pt,line join=round] ( 36.71,131.31) --
	( 39.46,131.31);

\path[draw=drawColor,line width= 0.6pt,line join=round] ( 36.71,178.83) --
	( 39.46,178.83);

\path[draw=drawColor,line width= 0.6pt,line join=round] ( 36.71,226.35) --
	( 39.46,226.35);
\end{scope}
\begin{scope}
\path[clip] (  0.00,  0.00) rectangle (397.48,252.94);
\definecolor{drawColor}{gray}{0.20}

\path[draw=drawColor,line width= 0.6pt,line join=round] ( 80.14, 15.47) --
	( 80.14, 18.22);

\path[draw=drawColor,line width= 0.6pt,line join=round] (147.93, 15.47) --
	(147.93, 18.22);

\path[draw=drawColor,line width= 0.6pt,line join=round] (215.72, 15.47) --
	(215.72, 18.22);

\path[draw=drawColor,line width= 0.6pt,line join=round] (283.52, 15.47) --
	(283.52, 18.22);

\path[draw=drawColor,line width= 0.6pt,line join=round] (351.31, 15.47) --
	(351.31, 18.22);
\end{scope}
\begin{scope}
\path[clip] (  0.00,  0.00) rectangle (397.48,252.94);
\definecolor{drawColor}{gray}{0.30}

\node[text=drawColor,anchor=base,inner sep=0pt, outer sep=0pt, scale=  0.88] at ( 80.14,  7.21) {$\theta$};

\node[text=drawColor,anchor=base,inner sep=0pt, outer sep=0pt, scale=  0.88] at (147.93,  7.21) {$U_{\infty}$};

\node[text=drawColor,anchor=base,inner sep=0pt, outer sep=0pt, scale=  0.88] at (215.72,  7.21) {$q$};

\node[text=drawColor,anchor=base,inner sep=0pt, outer sep=0pt, scale=  0.88] at (283.52,  7.21) {$\beta$};

\node[text=drawColor,anchor=base,inner sep=0pt, outer sep=0pt, scale=  0.88] at (351.31,  7.21) {$\nu_{max}$};
\end{scope}
\begin{scope}
\path[clip] (  0.00,  0.00) rectangle (397.48,252.94);
\definecolor{drawColor}{RGB}{0,0,0}

\node[text=drawColor,anchor=base,inner sep=0pt, outer sep=0pt, scale=  1.10] at ( 13.01,129.05) {$S^{V}_i$};
\end{scope}
\end{tikzpicture}
    \caption{Estimation of $S^i_{V}$ for each input with 1024 model evaluations and a confidence interval estimated with $100$ bootstrap resamples.}
    \label{fig:vorob}
\end{figure}
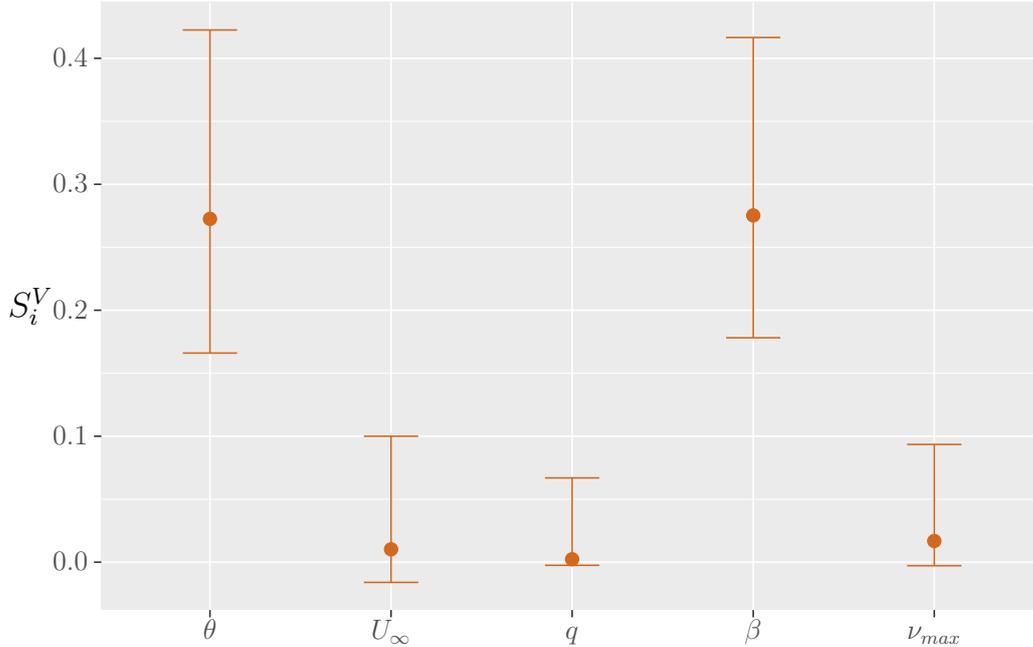
As these indices do not have a decomposition, it is difficult to justify a comparison between the value of each index. The only possible interpretation is that an index far from zero means that the input has an effect on the output. Here, the intervals of $\beta$ and $\theta$ do not include zero, which means that they have an effect. The other three, however, are close to $0$, implying that they may not be influential. The confidence intervals are also wide, which means that in another application we might fail to detect influential inputs. To become significant, additional model evaluations or a different estimation method is required. However, methods such as pick and freeze or rank-based estimation do not appear to be applicable in this scenario, as the quantity being estimated is not strictly a conditional variance.

\subsection{Universal sensitivity indices }
\label{subsec:univ}

In this second part, we propose to use \textit{universal} indices from \cite{GSA_Wass_universal_index}. In the latter paper, the authors define indices that can be applied to any metric output space. Using them with set-valued outputs, we define the following indices:
\begin{definition}
    $$S^{\text {Univ}}_{i}(\gamma,\mathbb Q)=\frac{\int_{0}^1 \operatorname{Var}\mathbb{E}\left[\lambda(\Gamma \Delta \gamma_{a}) \mid U_{i}\right] d \mathbb{Q}(a)}{\int_{0}^1 \operatorname{Var}\left(\lambda(\Gamma \Delta \gamma_a)\right) d \mathbb{Q}(a)}.$$ where $(\gamma_a)_{a \in [0,1]} \subset \Xdom$ is a collection of test sets parameterised by $a \in [0,1]$
    and $\mathbb Q$ is a probability distribution on $[0, 1]$ to choose.
\end{definition}%
The idea behind these indices is to quantify the effect of inputs on multiple scalar transformations of the set-valued output, rather than on the set-valued output itself. Specifically, we measure this variability by the volume of the symmetric difference between the output set and a collection of test sets.

Similar to Sobol' indices, the contribution of each input and their interactions can be decomposed to rank the inputs. Indeed, the decomposition can be obtained by taking the Sobol-Hoeffding decomposition of $\operatorname{Var}\left(\lambda(\gamma \Delta \gamma_a)\right)$ for each $a$ and then summing for $0$ to $1$ with respect to the measure $\mathbb Q$. An independence test is not accessible, mainly because the family of all transformations does not necessarily characterise the whole distribution of the set-valued output. However, screening can still be done, either by keeping only the inputs above a certain threshold, or by keeping the first inputs in the ranking.

After selecting the test sets and the law $\mathbb Q$, these indices can be estimated in a similar way to Sobol's indices. We use a rank-based estimator, which is recalled in \cite{GSA_Wass_universal_index}. Plugging in the volume estimation, the estimator $\widehat{S}^{\text {Univ}}_{i}$ of $S^{\text {Univ}}_{i}$ is given by the ratio between
\begin{align}
\label{eq:univnum}
    \widehat{S}_{num}=&\frac{1}{N_a}\sum_{l=1}^{N_a} \left[\frac{1}{n}\sum_{j=1}^n \left( \hat{\lambda}_m(\Gamma^{(j)} \Delta \gamma_{a_l})\right) \left( \hat{\lambda}_m(\Gamma^{(N_i(j))} \Delta \gamma_{a_l})\right) \right]\\
    &-\frac{1}{N_a}\sum_{l=1}^{N_a} \left[\frac{1}{n}\sum_{j=1}^n \left( \hat{\lambda}_m(\Gamma^{(j)} \Delta \gamma_{a_l})\right) \right]^2 \nonumber
\end{align}
and,
\begin{align}
\label{eq:univden}
   \widehat{S}_{den}= \frac{1}{N_a}\sum_{l=1}^{N_a} \left[\frac{1}{n}\sum_{j=1}^n \left( \hat{\lambda}_m(\Gamma^{(j)} \Delta \gamma_{a_l})\right)^2 \right]-\frac{1}{N_a}\sum_{l=1}^{N_a} \left[\frac{1}{n}\sum_{j=1}^n \left( \hat{\lambda}_m(\Gamma^{(j)} \Delta \gamma_{a_l})\right) \right]^2,
\end{align}
where $(a_l)$ is an iid sample of the law $\mathbb Q$ and $N_i(j)$ is the index in the sample $(U_i^l)$ that comes after $U_i^j$ when $(U_i^l)$ is sorted in ascending order (see \cite{rank_sobol} for details).

We compute the indices with $n=1000$ model evaluations and $N_a=100$ test sets. To remain general, $\Xdom$ is rescaled to $[0,1]^3$ which is centered on $x^0=(\frac{1}{2},\frac{1}{2},\frac{1}{2})$. We test four families of test sets defined on $[0,1]^3$:
\begin{itemize}
    \item Centered balls: $\gamma_a=B(x^0,a)$ with $\mathbb Q \sim \Udom ([0,\frac{1}{2}])$
    \item Centered squares:  $\gamma_a=\{x \in [0,1]^{3} , ||x-x^0||_{\infty}\leq a\}$ with $\mathbb Q \sim \Udom ([0,\frac{1}{2}])$
    \item Slides along the i-th dimension: $\gamma_a^i = \{x \in [0,1]^{3}, x^i \leq a \}$ with $\mathbb Q \sim \Udom ([0,1])$
    \item Vorob'ev quantiles: $\gamma_a= \{x\in [0,1]^{3}, \Proba(x \in \Gamma) \geq a \}$ with $\mathbb Q \sim \mathcal N(\frac{1}{2},\sigma^2)$
\end{itemize}
The slides are used with $i=3$ to get levels of concentration test sets. Confidence intervals are obtained by bootstrapping with $B=100$ resamples. We use bootstrap samples of size $0.8n=800$ without replacement because replacement introduces high bias in rank-based estimation. The intervals are adjusted by a correction factor. The results are displayed in Figure \ref{fig:univ}.

\begin{figure}
    \centering
    \input{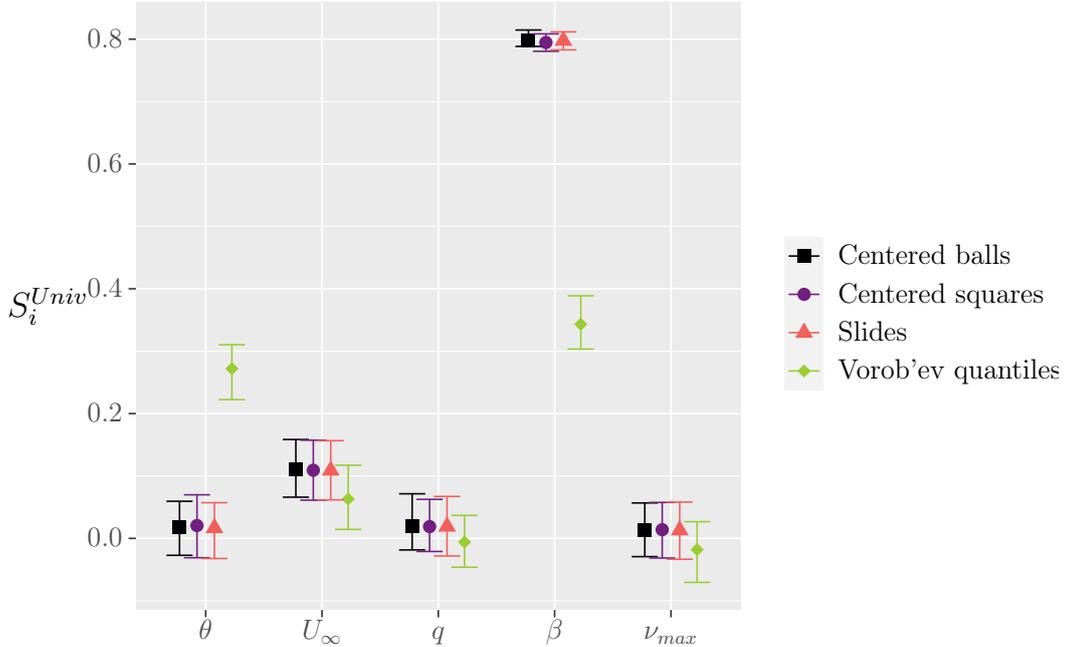}
    \caption{Estimation of the universal indices ${S}^{\text {Univ}}_{i}$ for each input and for four different test sets and $N_a=100$, with 1000 model evaluations. Confidence intervals are obtained with $100$ bootstrap samples}
    \label{fig:univ}
\end{figure}

The results are comparable when using centered balls, centered squares, and slices. $\beta$ is the most dominant input with an index value of about $80\%$. The value of $U_{\infty}$ is approximately $10\%$, and all other inputs have minimal impact. In contrast, when Vorob'ev quantiles are the test sets, $\beta$ and $\theta$ are the most important inputs, and $U_{\infty}$ also has some influence to a lesser extent. The output is clearly not influenced by either $q$ or $\nu_{max}$. This difference shows that the choice of test sets is crucial, as it has a strong influence on the values of the indices. When the law $\mathbbold Q$ of Vorob'ev quantiles is approximately $0.5$, their geometry is similar to $\Gamma$'s realisations. Therefore, changes in $\Gamma$ can be accurately detected by measuring the volume of the symmetric difference. Conversely, it may not be fully possible to detect all variations of the random set by comparing realisations of $\Gamma$ with simple sets as squares, cubes or slices.

\subsection{HSIC-based sensitivity analysis}
\label{subsec:kernel}
In this section, we propose to define kernel-based sensitivity indices with set-valued outputs, as the use of kernels makes such indices very permissive in the type of considered outputs (or inputs).

Sensitivity analysis, based on measures of dependence, consists in examining the influence of an input $X_i$ on an output $Y$ by measuring their mutual dependence. To do this, a distance is calculated between the joint distribution $\Proba_{X_i,Y}$ and the product of their marginal distributions $\Proba_{X_i} \otimes \Proba_Y$. If this distance is zero, then $X_i$ and $Y$ are independent, meaning $X_i$ has no impact on $Y$. A frequently used measure for dependence is the Hilbert-Schmidt Independence Criterion (HSIC). It operates by selecting appropriate input and output kernels, and subsequently measuring the squared distance between the mean embeddings of $\Proba_{X_i,Y}$ and $\Proba_{X_i} \otimes \Proba_Y$ in the corresponding Reproducing Kernel Hilbert Space (RKHS), see \cite{Gretton_hsic1} for a proper definition of the tools needed to define the HSIC. This calculation can be performed using just one sample, which is among the reasons why HSICs have gained popularity.

If a \textit{characteristic} kernel is available on any space, meaning a kernel whose mean embedding in the RKHS is injective, then HSIC-based indices can be used. Consequently, their application is appropriate for sensitivity analysis of set-valued models. \cite{fellmann2023kernelbased} introduced a kernel $k_{set}$ defined on sets, which is proved to be characteristic. It is then used to define HSIC-based indices on sets. The latter definition requires to have
\begin{itemize}
    \item  a characteristic kernel on $\mathcal L (\Xdom)$. The proposed kernel in \cite{fellmann2023kernelbased} is
    $$k_{set}(\gamma_1,\gamma_2)=\exp\left(- \frac{\lambda(\gamma_1 \Delta \gamma_2)}{2\sigma^2} \right) ~~ \forall \gamma_1,\gamma_2 \subset \Xdom, $$
    where $\sigma^2$ is a normalization constant.
    \item  $p$ characteristic kernels $K_i$ on the input space $\Udom_i$. As shown in \cite{daveiga:hal-03108628}, an ANOVA-like decomposition of HSIC exists if the input kernels are ANOVA, i.e. they can be written as $1+k_i$, where $k_i$ is a kernel whose induced RKHS consists of zero-mean functions. Sobolev kernels are the most commonly used kernels that fulfil these requirements. Furthermore, classic characteristic kernels can be modified to become ANOVA, as described in \cite{anova_kernel}.

\end{itemize}
\begin{definition}
    With the previous kernels, HSIC-based indices on sets are defined by 
    $$\forall i \in \{1,...,p\},~ S^{\operatorname{H}_{set}}_i :=\frac{\operatorname{H}_{set}(U_i,\Gamma)}{\operatorname{H}_{set}(\bm U,\Gamma)},$$
    where 
$$\operatorname{H}_{set}(U_i,\Gamma)= \Esp \left[ (K_i(U_i,{U_i}')-1)k_{set}(\Gamma,\Gamma') \right],$$
$$\operatorname{H}_{set}(\bm U,\Gamma)= \Esp \left[ (K(\bm U,{\bm U}')-1)k_{set}(\Gamma,\Gamma') \right],$$ 
with 
$$K=\bigotimes_{j=1}^p K_j,$$
and $(\bm U', \Gamma')$ an independent copy of $(\bm U, \Gamma)$.
\end{definition}

Since these indices are zero if and only if the considered input $U_i$ and the output $\Gamma$ are independent, they are suitable for screening. Using this result, p-values associated with the independent tests can be calculated to screen the inputs. Besides, we choose the input kernels $K_i$ as ANOVA kernels so that the indices $S_i^{H_{set}}$ have an ANOVA decomposition as done in \cite{fellmann2023kernelbased}. Subsequently, the inputs can be ranked by their impact based on each index's value.

The indices can be estimated using:
\begin{equation}
\label{eq:hsicestim}
\widehat{\widehat{\operatorname{H}_{set}}}\left(U_i, \Gamma\right)=\frac{2}{n(n-1)} \sum_{j < l}^{n}\left(K_{i}\left(U_i^{(j)}, U_i^{(l)}\right)-1\right) \exp(-\frac{\lambda(\Xdom)}{2\sigma^2}\hat{\lambda}_m(\Gamma^{(j)} \Delta \Gamma^{(l)})).
\end{equation}

We compute the indices for each of the five inputs using a sample size of $n=1000$. The results for five different input ANOVA kernels are compared using the following kernels:
\begin{itemize}
    \item the Sobolev kernel of order $1$, $k_{sob}(x,y)= 1+(x-\frac{1}{2})(y-\frac{1}{2})+\frac{1}{2}[(x-y)^2-|x-y| +\frac{1}{6}]$
    \item the Gaussian kernel, $k_{rbf} (x,y) = e^{-\frac{1}{2} \left( \frac{x-y}{\sigma}\right)^2}$ with $\sigma > 0$,
    \item the Laplace kernel, $k_{exp} (x,y) =e^{-\frac{|x-y|}{h}}$ with $h>0$,
    \item the Matérn 3/2, $k_{3/2} (x,y) =\left( 1+\sqrt 3 \frac{|x-y|}{h} \right) e^{- \sqrt 3 \frac{|x-y|}{h}}$ with $h >0$,
    \item the Matérn 5/2, $k_{5/2} (x,y) =\left( 1+\sqrt 5\frac{|x-y|}{h}+ \frac{5}{3} \frac{|x-y|}{h^2} \right) e^{- \sqrt 5 \frac{|x-y|}{h}}$ with $h >0$.
\end{itemize}
The Sobolev kernel is already ANOVA, whereas four additional kernels need adaptations to become ANOVA (see \cite{anova_kernel} for the detailed transformation). Confidence intervals are estimated using bootstrap methods with $B=100$ resamples. The value of $\sigma$ is taken as the empirical median of $\hat{\lambda}_m(\Gamma^{(j)} \Delta \Gamma^{(l)})$ for $j<l$. Figure \ref{hsic_comp} shows our results. 
\begin{figure}[]
    \centering
    \input{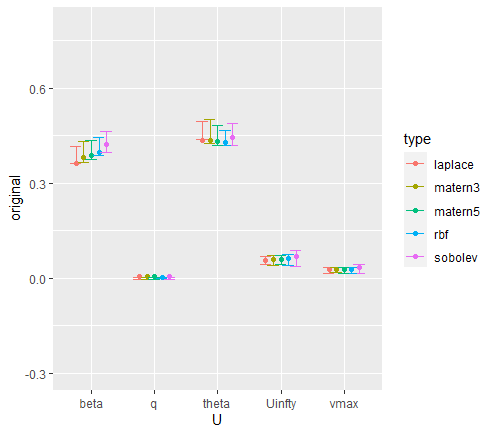}
    \caption{Estimation of $S^{\operatorname{H}_{set}}_i$ for five input kernels, 1000 model evaluations. Confidence intervals are obtained by bootstrap with $100$ resamples}
    \label{hsic_comp}
\end{figure}
We observe that the choice of the input kernel does not significantly affect the values of the indices: regardless of the kernel chosen, $\beta$ and $\theta$ remain the most influential variables (with about $40\%$ each) and the other three inputs the least influential (with about $10\%$). The input ranking can be used for screening purposes. For instance, one may decide to keep only the top three most influential inputs. However, using HSIC-based indices guarantees trustworthy screening by computing the p-values of independence tests between an input and an output. This is done in Table \ref{tab:pval}, where the p-values for each input are computed by asymptotic estimation (\cite{song_supervised_2007}) for each input kernel. 
\begin{table}[]
\centering
\begin{tabular}{|c|c|c|c|c|c|c|}
\hline
& $\beta$ & $q$ & $\theta$ & $U_{\infty}$ & $\nu_{max}$ \\ \hline
$k_{sob}$ & $< 10^{-16}$ & $0.74 \pm 0.6$ & $< 10^{-16}$ & $< 10^{-16}$ & $< 10^{-16}$ \\ \hline
$k_{rbf}$ & $< 10^{-16}$ & $0.76 \pm 0.6$ & $< 10^{-16}$ & $< 10^{-16}$ & $< 10^{-16}$ \\ \hline
$k_{exp}$ & $< 10^{-16}$ & $0.89 \pm 0.5$ & $< 10^{-16}$ & $< 10^{-16}$ & $< 10^{-16}$ \\ \hline
$k_{3/2}$ & $< 10^{-16}$ & $0.82 \pm 0.6$ & $< 10^{-16}$ & $< 10^{-16}$ & $< 10^{-16}$ \\ \hline
$k_{5/2}$ & $< 10^{-16}$ & $0.80 \pm 0.6$ & $< 10^{-16}$ & $< 10^{-16}$ & $< 10^{-16}$ \\ \hline
\end{tabular}
\caption{Asymptotic estimation of the p-values of HSIC-ANOVA-based independence test with five input kernel and $1000$ model evaluations and $100$ bootstrap samples.}
\label{tab:pval}
\end{table} If the p-value exceeds 0.05, then the input has no influence on the output. Specifically, in this case, we observe that only $q$, for each kernel, has a p-value above 0.05. Additionally, a standard deviation is estimated for $q$ through bootstrap analysis, which is given in the second column. We confirm that despite this deviation, the p-values of $q$ remain above 0.05. For the remaining four inputs, p-values below $10^{-16}$ were obtained, and standard deviations of the same scale indicate that these four inputs are influential. The chosen input kernel slightly modifies the p-values in this case, but it does not affect the screening results. However, in other test cases, the choice of kernel could impact the results, which is the primary limitation of the HSIC-based indices. Some research has also been conducted to aid in kernel selection. For instance, in \cite{amri_morel}, the authors suggest selecting the kernel that maximises test power. Without further investigation into the impact of kernels, the Sobolev kernel is preferable as it does not require any choice of hyperparameters.



\section{Comparison of the indices }
 \label{sec:comparison}
In this section we compare the four indices recalled here :
\begin{itemize}
    \item the generalised first-order Sobol' indices $S^{gen}_i$;
    \item the Vorob'ev based indices $S^V_i$;
    \item the universal indices $S^{\text {Univ}}_{i}$ computed using Vorob'ev quantiles as test sets;
    \item the HSIC-based indices $S^{\operatorname{H}_{set}}_i$ using the Sobolev kernel as ANOVA input kernel.
\end{itemize}
Each index is computed with a budget of about $1000$ model evaluations. $S^{gen}_i$ are estimated with the pick and freeze estimator given in (\ref{eq:pickandfreeze}). $S_i^V$ are estimated using a double loop method as described in (\ref{eq:vorob}), with $32$ model evaluations in each loop, resulting in $1024$ evaluations. The universal indices are evaluated using the rank-based estimation approach described in (\ref{eq:univnum}) and (\ref{eq:univden}), with $N_a=100$ test sets. Lastly, the estimate for $S^{\operatorname{H}_{set}}_i$ is given by (\ref{eq:hsicestim}). 

The results are shown in Figure \ref{comp1000}.
\begin{figure}
    \centering
    \input{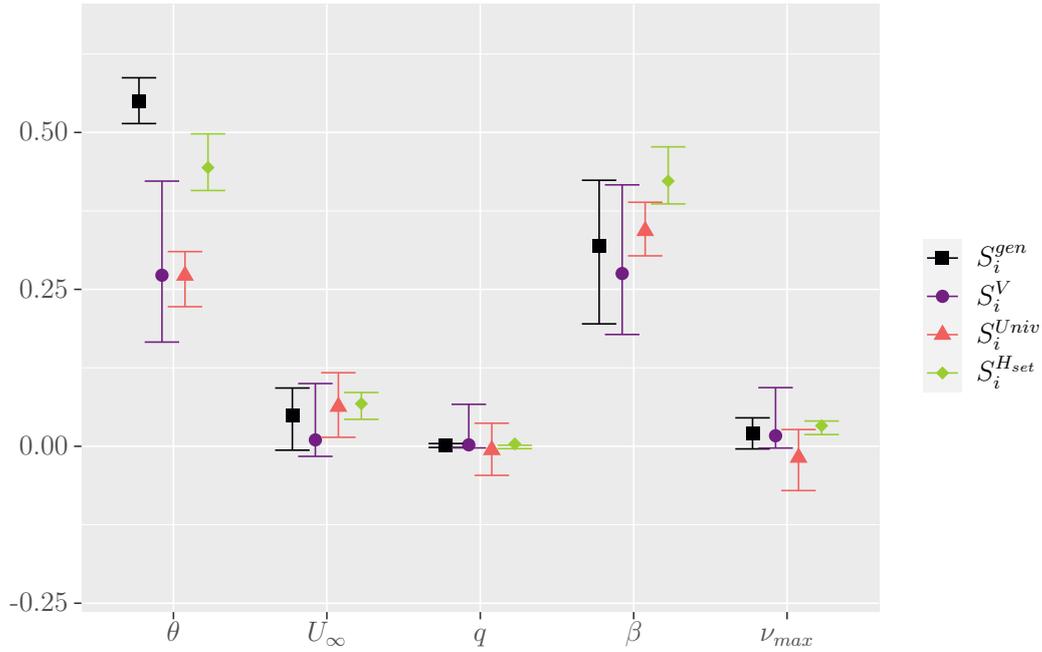}
    \caption{Comparison of the four indices with a total budget of $n= 1000$ model evaluations. $100$ bootstrap sample are used to estimate confidence intervals}
    \label{comp1000}
\end{figure}
First we check that the four indices give similar results. The most influential inputs are $\theta$ and $\beta$, and the least influential are $U_{\infty}$, $q$ and $\nu_{max}$. However, there are differences in the obtained results between the indices regarding screening and ranking.
The HSIC is the only index that allows access to a reliable independence test, which categorizes $q$ as non-influential in screening (according to the p-values given in Table \ref{tab:pval}). For the other three indices, a possible screening method is to select a threshold under which inputs will be deemed non-influential. For instance, if a 10\% threshold is chosen, then $U_{\infty}$, $q$, and $\nu_{max}$ would be considered negligible. However, with a smaller threshold, conclusions would be challenging to draw as some confidence intervals overlap. 
The four significant inputs, identified by HSIC-based tests, can be confidently ranked since their confidence intervals do not overlap. In contrast, the ranking of the remaining inputs varies for each index. Similar to $S_i^{H_{set}}$, $S_i^V$ places $\theta$ and $\beta$ at the same level. However, $\theta$ is much more influential than $\beta$ according to the generalised Sobol' indices, and the opposite is true for the universal indices. The ranking is justified in theory for all indices except for $S_i^V$, which lacks any decomposition.

Examining the confidence intervals, all four indices remain satisfactory with $1000$ model evaluations, but the HSIC-based indices, and to a lesser extent the universal indices, have much less variability. To emphasise this disparity, we create an identical plot on Figure \ref{comp100} with only $100$ model evaluations ($10$ in each loop for $S_i^V$). 
\begin{figure}
    \centering
    \input{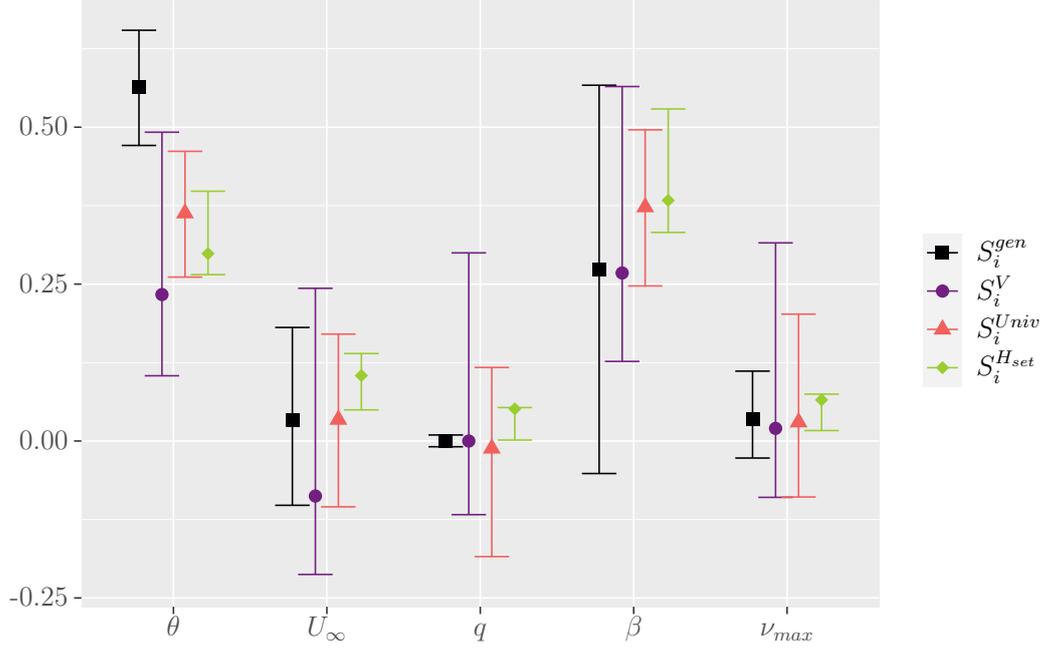}
    \caption{Comparison of the four indices with a total budget of $n= 100$ model evaluations. $100$ bootstrap sample are used to estimate confidence intervals}
    \label{comp100}
\end{figure}
This time the HSIC-based indices can be used as they have relatively small confidence intervals. The same conclusion in terms of screening and ranking can be reached with a single $100$-sample. The universal indices can also still be used in this case, but their variability makes it possible to have non-positive indices. As for the other two indices, it is important to note that they have not yet converged, rendering them unusable with only $100$ model evaluations.

The strengths and limitations of each index are summarised in Table \ref{tab:comp}. 

 \begin{table}
\centering
\begin{tabular}{|l|m{2.5cm}|m{2.5cm}|m{2.5cm}|m{2.5cm}|}
\cline{2-5}  
\multicolumn{1}{c|}{} 
& \textbf{Ranking} & \textbf{Screening} & \textbf{Evaluations} & \textbf{Limitations} \\
\hline
$S^{gen}_i$ & \ding{51} ANOVA & $\bm \sim$ threshold & $\bm \sim (p+1)n$ & \ding{55} pointwise influence \\
\hline
$S_i^V$ & \ding{55} no decomposition & \ding{55} no screening method & \ding{55} $n^2 $ (double loop)& \ding{51} no choice to be made \\
\hline
$S^{\text {Univ}}_{i}$ & \ding{51} ANOVA & $\bm \sim$ threshold & $\bm \sim$ $n$ but big confidence intervals & \ding{55} choice of the test sets  \\
\hline
$S^{\operatorname{H}_{set}}_i$ & \ding{51} ANOVA & \ding{51} independence test & \ding{51}  $n$ and small confidence intervals & $\bm \sim$ choice of kernel\newline
\ding{55} Interactions interpretations\\
\hline
\end{tabular}
\caption{Indices comparison}
\label{tab:comp}
\end{table}

\section{Conclusion}
\label{sec:conclusion}
In this paper, we aim to quantify the impact of inputs on a map-valued model, specifically a concentration map model for a realistic urban setting in the greater Paris region (France). To achieve this goal, we propose a sensitivity analysis of the model. As a starting point, we perform a pointwise sensitivity analysis on each point of the map. This produces maps of Sobol' indices and allows us to observe the influence of each input in each zone of the map. Generalised indices are also derived to synthesise the overall impact of each input. A second approach deals with the model as a whole. Three types of indices, defined for set-valued outputs, are introduced and used with the map-valued outputs. The first is based on quantities from random set theory. The second is based on universal indices that can be used with set outputs. Finally, the last relies on kernel-based sensitivity analysis and, in particular, on HSIC-based indices. A comparison of each index reveals that the HSIC-based index is the most efficient in terms of the number of model evaluations and can be used for both screening and ranking. However, it may not be the most suitable option if we are interested in examining the interactions between inputs. In such situations, universal indices and generalized Sobol' may be more appropriate.

The introduction of sensitivity indices for sets was initially devised for their application to maps. However, their utility extends beyond maps and can be effectively employed for any model with set-valued outputs, like for example sets of feasible points in mechanical engineering. While we specifically addressed the two-dimensional case to directly handle maps, it is also worth noting that theses indices can be used with $\Xdom$ of any finite dimension.

\section{Acknowledgment}
This research was conducted with the support of the consortium in Applied Mathematics
\href{https://doi.org/10.5281/zenodo.6581216   }{\underline{CIROQUO}}, gathering partners in technological research and academia in the development of
advanced methods for Computer Experiments.

\setlength\bibitemsep{0.5\itemsep}
\printbibliography
\end{document}